\newtheorem{theorem}{Theorem}[section]
\newtheorem{lemma}[theorem]{Lemma}
\newtheorem{corollary}[theorem]{Corollary}
\newtheorem{definition}[theorem]{Definition}
\theoremstyle{remark}
\newcommand{\tr}{\mbox{\rm tr\,}}
\newcommand{\udots}
{\mathinner{\mskip1mu\raise1pt\vbox{\kern7pt\hbox{.}}  
\mskip2mu\raise4pt\hbox{.}\mskip2mu\raise7pt\hbox{.}\mskip1mu}} 
\begin{document}
\title{gradient estimates via two-point  function  under Ricci flow }
\keywords{gradient estimate, parabolic equation, Ricci flow}
\thanks{\noindent \textbf{MR(2010)Subject Classification} 53C44}

\author{min chen}
\address[Corresponding author] {University of Science and Technology of China, No.96, JinZhai Road Baohe District,Hefei,Anhui, 230026,P.R.China.} 
\email{cmcm@mail.ustc.edu.cn}

\thanks{The research is supported by the National Nature Science Foudation of China  No. 11721101 No. 11526212 }
\begin{abstract}
 We derive estimates relating the values of a solution at any two points to the distance between the points, for quasilinear parabolic equations on compact Riemannian manifolds under the Ricci flow.  
\end{abstract}
\maketitle

\numberwithin{equation}{section}
\section{Introduction}
 Andrews and Clutterbuck \cite{2,3} and Andrews \cite{4} study the two-point estimates and their applications in a variety of geometric contexts. Recently,
Ben Andrew and Changwei Xiong \cite{1} use the two-point estimates  to deduce  gradient estimates of the solutions of the following quasilinear equations.
\begin{equation}
\left[ \alpha(u,|Du|)\frac{D_iuD_ju}{|Du|^2}+\beta(u,|Du|)(\delta_{ij}-\frac{D_iuD_ju}{|Du|^2})\right]D_iD_ju+q(u,|Du|)=0,
\end{equation}
where the left side of $(1.1)$ is continuous on $ \mathbb R\times TM^*_x\times \mathcal{L}^2_s(TM)$, $\alpha$ and $\beta$ are nonnegative functions and $\beta(s,t)>0$ for $t>0.$ They prove  the following result in their recent work \cite{1}.
\begin{theorem}\cite{1} 
Let $M^n$ be a compact manifold with $Ric\geq 0$ and $u$ be a viscosity solution of Equation $(1.1)$. Suppose the barrier $\varphi: [a, b]\rightarrow [\inf u,\sup u]$ satisfies
\begin{equation}
\varphi'>0,
\end{equation}
\begin{equation}
\frac{d}{dz} \Bigg ( \frac{\varphi''\alpha(\varphi, \varphi')+q(\varphi,\varphi')}{\varphi'\beta(\varphi,\varphi')} \Bigg ) <0. 
\end{equation}
Moreover let $\psi$ be the inverse of $\varphi$, i.e., $\psi (\varphi(z))=z.$ Then it holds that  
\[\psi(u(y))-\psi(u(x))-d(x,y)\leq 0, \forall x,y \in M.\]
\end{theorem}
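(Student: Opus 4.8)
The plan is to run a two-point maximum principle on $M\times M$, in the spirit of Andrews–Clutterbuck, carried out for the transformed function $v:=\psi\circ u$, and to handle the fact that $u$ is only a viscosity solution by the Crandall–Ishii lemma (theorem on sums). First I would put $Z(x,y):=\psi(u(y))-\psi(u(x))-d(x,y)=v(y)-v(x)-d(x,y)$ on $M\times M$ and argue by contradiction: if the inequality in the statement fails, then $C:=\sup_{M\times M}Z>0$, which by compactness is attained at some $(x_0,y_0)$, and since $Z\equiv0$ on the diagonal $x_0\neq y_0$. A standard cut-locus reduction (Calabi's trick: replace $y_0$ by a point slightly inside along a minimizing geodesic, absorbing a small error) lets me assume $x_0\notin\mathrm{Cut}(y_0)$, so that $d$ is smooth near $(x_0,y_0)$ on $M\times M$; write $\gamma\colon[0,L]\to M$ for the unit-speed minimizing geodesic from $x_0$ to $y_0$ and set $w_0:=\psi(u(x_0))$, $w_1:=\psi(u(y_0))$, so that $C=w_1-w_0-L>0$ with $L>0$ forces $w_1>w_0$.

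Since $\psi$ is a $C^2$ diffeomorphism with $\psi'=1/\varphi'>0$, $v$ is a viscosity solution of the degenerate-elliptic equation obtained from $(1.1)$ via $u=\varphi(v)$; at a point where $|Dv|=1$, writing $\hat p=Dv/|Dv|$ and $v_{\hat p\hat p}:=D^2v(\hat p,\hat p)$, it reads
\[
\alpha(\varphi(v),\varphi'(v))\big[\varphi''(v)+\varphi'(v)\,v_{\hat p\hat p}\big]+\beta(\varphi(v),\varphi'(v))\,\varphi'(v)\big[\Delta v-v_{\hat p\hat p}\big]+q(\varphi(v),\varphi'(v))=0 .
\]
Applying the theorem on sums at the maximum of $Z$, for each $\eta>0$ there are symmetric bilinear forms $X$ on $T_{x_0}M$, $Y$ on $T_{y_0}M$ with $(\dot\gamma(0),-X)\in\bar J^{2,-}v(x_0)$, $(\dot\gamma(L),Y)\in\bar J^{2,+}v(y_0)$ and $\mathrm{diag}(X,Y)\le A+\eta A^2$, where $A$ is the Hessian of $d$ on $M\times M$ at $(x_0,y_0)$. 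The gradient entries being the unit vectors $\dot\gamma(0),\dot\gamma(L)$, the coefficients get evaluated at the model values $(\varphi(w_i),\varphi'(w_i))$; substituting $(\dot\gamma(0),-X)$ into the supersolution inequality and $(\dot\gamma(L),Y)$ into the subsolution inequality, and using that the Hessian of a distance function vanishes in its own gradient direction, I obtain, with
\[
Q(z):=\frac{\varphi''(z)\,\alpha(\varphi(z),\varphi'(z))+q(\varphi(z),\varphi'(z))}{\varphi'(z)\,\beta(\varphi(z),\varphi'(z))},
\]
the inequalities $\tr X-X(\dot\gamma(0),\dot\gamma(0))\ge Q(w_0)-O(\eta)$ and $\tr Y-Y(\dot\gamma(L),\dot\gamma(L))\ge-Q(w_1)-O(\eta)$; here one also uses $X(\dot\gamma(0),\dot\gamma(0))\le O(\eta)$, $Y(\dot\gamma(L),\dot\gamma(L))\le O(\eta)$, from testing $\mathrm{diag}(X,Y)\le A+\eta A^2$ against $(\dot\gamma(0),0)$, $(0,\dot\gamma(L))$ and recalling $\alpha\ge0$, $\beta>0$.

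Now the geometry enters. Pick a parallel orthonormal frame $E_1,\dots,E_{n-1}$ along $\gamma$ orthogonal to $\dot\gamma$ and set $\xi_i:=(E_i(0),E_i(L))\in T_{x_0}M\oplus T_{y_0}M$. The second variation of arc length along the variations $\epsilon\mapsto(\exp_{x_0}(\epsilon E_i(0)),\exp_{y_0}(\epsilon E_i(L)))$ gives $A(\xi_i,\xi_i)\le-\int_0^L\langle R(E_i,\dot\gamma)\dot\gamma,E_i\rangle\,ds$; summing over $i$ and using $\mathrm{Ric}\ge0$ gives $\sum_i A(\xi_i,\xi_i)\le-\int_0^L\mathrm{Ric}(\dot\gamma,\dot\gamma)\,ds\le0$. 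Pairing $\mathrm{diag}(X,Y)\le A+\eta A^2$ against the $\xi_i$ and summing then gives $\big(\tr X-X(\dot\gamma(0),\dot\gamma(0))\big)+\big(\tr Y-Y(\dot\gamma(L),\dot\gamma(L))\big)\le O(\eta)$. Combining with the previous paragraph yields $Q(w_0)-Q(w_1)\le c\,\eta$ for some $c$ independent of $\eta$; letting $\eta\to0$ forces $Q(w_0)\le Q(w_1)$, while hypothesis $(1.3)$ says exactly that $Q$ is strictly decreasing, so $w_1>w_0$ gives $Q(w_0)>Q(w_1)$ — a contradiction. Hence $\sup_{M\times M}Z\le0$, which is the assertion.

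The step I expect to be most delicate is the second one: extracting second-order information from the viscosity solution in a form that controls the cross term $d(x,y)$ — exactly the role of the matrix bound $\mathrm{diag}(X,Y)\le A+\eta A^2$ — and verifying that neither the $\eta A^2$ error nor the ``radial'' Hessian entries $X(\dot\gamma(0),\dot\gamma(0))$, $Y(\dot\gamma(L),\dot\gamma(L))$ can destroy the strict inequality provided by $(1.3)$. By contrast, the cut-locus reduction and the second-variation/Ricci computation are routine.
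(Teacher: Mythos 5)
Your proposal is correct: the two-point contradiction argument for $v=\psi\circ u$ via the theorem on sums, the vanishing of the radial Hessian entries, and the second-variation/Ricci estimate for the cross terms is exactly the Andrews--Xiong argument, and it is the same template the paper itself follows in its proofs of the parabolic Theorems 1.4 and 1.6. Note that the paper states Theorem 1.1 only as a citation of \cite{1} and gives no proof of it, so there is nothing further to compare against.
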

By allowing $y$ to approach $x$, one gets the following gradient estimate
\begin{corollary}
Under the conditions of Theorem 1.1, for every $u\in C^1(M),$  then it holds that 
\[\nabla u(x)\leq \varphi'(\psi(u(x))),\]
for every $x\in M.$ \
\end{corollary}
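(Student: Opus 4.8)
The plan is to deduce the gradient bound from the two-point inequality of Theorem 1.1 by letting $y$ tend to $x$ along the gradient direction and expanding to first order. (Here the left-hand side $\nabla u(x)$ is to be read as the norm $|\nabla u(x)|$, the right-hand side $\varphi'(\psi(u(x)))$ being a scalar.) Throughout I would use only the conclusion of Theorem 1.1 together with the extra hypothesis $u\in C^1(M)$.

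First I would record that $\psi$ is continuously differentiable: since $\varphi\in C^1([a,b])$ and $\varphi'>0$ by $(1.2)$, the map $\varphi$ is a strictly increasing diffeomorphism onto $[\inf u,\sup u]$, so its inverse $\psi$ satisfies $\psi'(z)=1/\varphi'(\psi(z))>0$. Consequently the claimed inequality $|\nabla u(x)|\le \varphi'(\psi(u(x)))$ is equivalent to $\psi'(u(x))\,|\nabla u(x)|\le 1$.

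Now fix $x\in M$. If $\nabla u(x)=0$ the inequality is immediate from $\varphi'>0$, so assume $\nabla u(x)\ne 0$ and set $v=\nabla u(x)/|\nabla u(x)|\in T_xM$. For small $t>0$ let $y_t=\exp_x(tv)$, so that $d(x,y_t)=t$. Because $u\in C^1$ and $\exp_x$ is smooth, $u(y_t)=u(x)+t|\nabla u(x)|+o(t)$; and because $\psi$ is differentiable at $u(x)$, $\psi(u(y_t))=\psi(u(x))+\psi'(u(x))\,t|\nabla u(x)|+o(t)$ as $t\downarrow 0$. Substituting $y=y_t$ into the inequality $\psi(u(y))-\psi(u(x))-d(x,y)\le 0$ of Theorem 1.1 gives $\psi'(u(x))\,t|\nabla u(x)|-t+o(t)\le 0$; dividing by $t>0$ and letting $t\downarrow 0$ yields $\psi'(u(x))\,|\nabla u(x)|\le 1$, which is the assertion.

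I do not expect a genuine obstacle here; this is a routine limiting argument, which is essentially the content of the phrase ``by allowing $y$ to approach $x$.'' The only points needing a little care are that $\psi$ is actually differentiable at $u(x)$ — guaranteed by $\varphi'>0$, with one-sided derivatives being enough should $u(x)$ equal $\inf u$ or $\sup u$, which in any case are attained at critical points of $u$ on the compact manifold $M$ — and that the first-order remainders above are genuinely $o(t)$, so that they vanish after dividing by $t$; this follows from $u\in C^1$ and the smoothness of the exponential map.
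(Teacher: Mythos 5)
Your argument is correct and is precisely the limiting argument the paper intends by the phrase ``by allowing $y$ to approach $x$'' (the paper supplies no further proof). The expansion of $\psi(u(\exp_x(tv)))$ along the gradient direction, the reduction to $\psi'(u(x))\,|\nabla u(x)|\le 1$ via $\psi'=1/(\varphi'\circ\psi)$, and the handling of the trivial case $\nabla u(x)=0$ are all sound.
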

We know that  Modica-type gradient estimates obtained by using the P-function in \cite{5} is a special case of this result with $\alpha=\beta=1$ and a generalized Modica's result in \cite{6} is also a special case of this result with $\alpha = 2\Psi''(z)z+\Psi'(z)$ and $\beta=\Psi(z)$. The details of the two-point estimate are comparatively simple and geometric compared to the calculations involved in the $P$- function approach. Since this method does not involve differentiating the equation, and consequently applies with minimal regularity requirements on the solution $u,$ corresponding to the viscosity solution requirement.

 We wonder if  this method can be use to derive the gradient estimates  of the parabolic equations. Azagra, Jim$\acute{e}$nez-Sevilla and Maci$\grave{a}$ \cite {9} define the viscosity solution and prove the maximum principle for semicontinuous function in the parabolic version on manifolds. What's more, various authors consider the gradient estimates under the Ricci flow,
\[\frac{\partial}{\partial t}g(x,t)=-2Ric(x,t).\]Shiping, Liu \cite{7} derived gradient estimates on a closed Riemannian manifold. 
\begin{theorem} \cite{7}
Let $(M,g(t))$ be a closed Riemannian manifold, where $g(t)$ evolves by Ricci flow in such a way that $-K_0 \leq Ric\leq K_1$ for $t\in[0,T].$ If $u$ is a positive solution to the equation $(\Delta-\partial_t)u(x,t)=0,$ then for $(x,t)\in M \times (0,T],$ it holds that 
\[\frac{|\nabla u(x,t)|^2}{u^2(x,t)}-\alpha \frac{u_t(x,t)}{u(x,t)}\leq \frac{n\alpha^2}{t}+\frac{n\alpha^3K_0}{\alpha-1}+n^{\frac{3}{2}}\alpha^2(K_0+K_1),\]
for any $\alpha>1.$ 
\end{theorem}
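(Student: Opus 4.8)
The plan is to run the Li--Yau maximum-principle argument, adjusted for the fact that the metric is moving by $\partial_t g=-2Ric$. Since $u>0$, set $f=\log u$; from $\Delta u=u_t$ one gets
\[
\Delta f+|\nabla f|^2=f_t,\qquad\text{equivalently}\qquad \Delta f-f_t=-|\nabla f|^2 .
\]
I would then introduce the Li--Yau quantity $w:=|\nabla f|^2-\alpha f_t$ and study $G:=t\,w$ on $M\times[0,\tau]$ for an arbitrary $\tau\in(0,T]$. Because $M$ is closed, no spatial cutoff is needed, so it suffices to evaluate everything at a point where $G$ attains its maximum over $M\times[0,\tau]$.

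The core step is to compute the evolution of $w$ under $\Delta-\partial_t$, retaining the terms produced by the evolving metric. Two inputs enter: the time-dependent Bochner formula
\[
(\Delta-\partial_t)|\nabla f|^2=2|\nabla^2 f|^2+2\langle\nabla f,\nabla(\Delta f-f_t)\rangle ,
\]
in which the Ricci term from $\Delta|\nabla f|^2$ cancels the one from $\partial_t|\nabla f|^2=2Ric(\nabla f,\nabla f)+2\langle\nabla f,\nabla f_t\rangle$; and the commutator identity $[\partial_t,\Delta]f=2\langle Ric,\nabla^2 f\rangle$, a consequence of the contracted second Bianchi identity, which gives
\[
(\Delta-\partial_t)f_t=-2\langle Ric,\nabla^2 f\rangle-2Ric(\nabla f,\nabla f)-2\langle\nabla f,\nabla f_t\rangle .
\]
Combining these, and using $\nabla(\Delta f-f_t)=-\nabla|\nabla f|^2=-\nabla w-\alpha\nabla f_t$ so that the $\langle\nabla f,\nabla f_t\rangle$ terms cancel, I expect to reach
\[
(\Delta-\partial_t)w=2|\nabla^2 f|^2-2\langle\nabla f,\nabla w\rangle+2\alpha\langle Ric,\nabla^2 f\rangle+2\alpha\,Ric(\nabla f,\nabla f).
\]

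Next I would apply the maximum principle. If $G\le 0$ on $M\times[0,\tau]$ the estimate at time $\tau$ is trivial; otherwise $G$ has a positive maximum at some $(x_0,t_0)$ with $t_0\in(0,\tau]$, where $\nabla w=0$, $\Delta w\le 0$ and $\partial_t(tw)\ge 0$, so that $(\Delta-\partial_t)w\le w/t_0$ there. Feeding in the evolution equation and using $Ric(\nabla f,\nabla f)\ge-K_0|\nabla f|^2$ together with $|\langle Ric,\nabla^2 f\rangle|\le\sqrt n\,(K_0+K_1)|\nabla^2 f|$ yields, at $(x_0,t_0)$,
\[
\frac{w}{t_0}\ \ge\ 2|\nabla^2 f|^2-2\alpha\sqrt n\,(K_0+K_1)\,|\nabla^2 f|-2\alpha K_0|\nabla f|^2 .
\]
The last ingredient is $|\nabla^2 f|^2\ge(\Delta f)^2/n$ combined with $\Delta f=\alpha^{-1}\bigl((1-\alpha)|\nabla f|^2-w\bigr)$, which (since $\alpha>1$ and $w>0$) gives the lower bound $|\nabla^2 f|\ge\bigl((\alpha-1)|\nabla f|^2+w\bigr)/(\sqrt n\,\alpha)$. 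Viewing the right-hand side above as a convex quadratic in $|\nabla^2 f|$ minimized over this admissible range, distinguishing whether its vertex is admissible, and then completing the square in $|\nabla f|^2$, one bounds $w(x_0,t_0)$ by an expression of the form $n\alpha^2/t_0+n\alpha^3K_0/(\alpha-1)+n^{3/2}\alpha^2(K_0+K_1)$ (up to the exact numerical constants and the precise power of $n$, which depend on the curvature bound fed in); since this is monotone in the time variable and $t_0\le\tau$, multiplying by $t_0$, using $G(x,\tau)\le G(x_0,t_0)$, and dividing by $\tau$ gives the asserted inequality at time $\tau$, hence everywhere as $\tau\in(0,T]$ was arbitrary.

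The step I expect to be the real work is the bookkeeping of the curvature errors created by the evolving metric, above all the cross term $2\alpha\langle Ric,\nabla^2 f\rangle$, which has no analogue in the classical Li--Yau estimate. It must be handled without merely Young-absorbing it into $2|\nabla^2 f|^2$: keeping $|\nabla^2 f|$ to first power and exploiting $|\nabla^2 f|\ge|\Delta f|/\sqrt n$ is what makes the last error term linear, rather than quadratic, in $K_0+K_1$; and then the completion of the square in $|\nabla f|^2$, together with $\alpha>1$, has to be arranged so that the three error terms come out in the stated form. A minor, routine point is checking that $u>0$ and the smoothness of $g(t)$ on $[0,T]$ supply enough regularity to legitimately use the first- and second-order conditions at the maximum.
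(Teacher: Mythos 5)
This statement is not proved in the paper: Theorem 1.3 is quoted verbatim from the reference of Shiping Liu, \emph{Gradient estimates for solutions of the heat equation under Ricci flow}, and appears only as background motivation, so there is no in-paper proof to compare yours against. Your proposal is the standard Li--Yau maximum-principle argument adapted to an evolving metric, which is exactly the method of that reference, and the parts you carry out in detail are correct: the cancellation of the Ricci terms between the time-dependent Bochner formula and $\partial_t|\nabla f|^2$, the commutator $[\partial_t,\Delta]f=2\langle Ric,\nabla^2 f\rangle$ via the contracted second Bianchi identity, the resulting evolution equation for $w=|\nabla f|^2-\alpha f_t$, and the first- and second-order conditions at the maximum of $tw$ on the closed manifold all check out.

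The one place you leave schematic --- ``up to the exact numerical constants and the precise power of $n$'' --- is also the one place where the argument could silently fail, so be aware of the pitfall: in the case where the vertex of the quadratic in $|\nabla^2 f|$ is admissible, minimizing gives $-b^2/2$ with $b=\alpha\sqrt n(K_0+K_1)$, which is quadratic in the curvature and yields no bound on $w$ at all; what saves you there is that vertex-admissibility itself, $\bigl((\alpha-1)|\nabla f|^2+w\bigr)/(\sqrt n\,\alpha)\le b/2$, directly bounds $w$ by $\tfrac12 n\alpha^2(K_0+K_1)$. In the other case, setting $z=(\alpha-1)|\nabla f|^2+w>0$ one arrives at $\tfrac{2t}{n\alpha^2}z^2\le w+2t(K_0+K_1)z+\tfrac{2\alpha tK_0}{\alpha-1}z$, and the linear dependence on $K_0+K_1$ comes from dividing this by $z$ (using $w\le z$), not from completing a square. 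With that reading, your outline closes and in fact gives constants at least as good as those stated.
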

In fact, we find the argument also works when the metric  evolves as a supersolution of the Ricci flow, i.e., $\frac{\partial g}{\partial t}\geq -2Ric.$ We prove the following result.
\begin{theorem} \label{thm:1.1} 
Let $M^n$ be a compact manifold with diameter $D$ and $g(t)$ a time-dependent metric on $M$ satisfying $\frac{\partial g}{\partial t}\geq -2 Ric.$  Assume Ricci curvature satisfies $Ric\ge 0$ for $t\in [0,T)$ and $u:M\times [0,T)\rightarrow \mathbb{R}$ is a viscosity solution of the heat equation
 \begin{equation}
u_t=\left[\alpha(u,|Du|,t)\frac{D_iuD_ju}{|Du|^2}+\beta(t)(\delta_{ij}-\frac{D_iuD_ju}{|Du|^2})\right]D_iD_ju+q(u,|Du|,t),
\end{equation}
where  $\beta(t)\ge 1.$ Suppose $\varphi:[0,D]\times [0,T]\rightarrow \mathbb{R}$  satisfies
\begin{align*}
 \frac{d}{ds}\Bigg ( \frac{\varphi_t-\varphi''\alpha(\varphi, \varphi',t)+q(\varphi,\varphi',t)}{\varphi'\beta(t)} \Bigg )& <0,\\
\varphi'>0.
\end{align*}
Assume the range of $u(\cdot,0)$ is contained in the interval $[\varphi(0,0), \varphi(D,0)].$
Let $\Psi$ be given by inverting $\varphi$ for each $t$ so that $\varphi(\Psi(z,t),t)=z$ for each $z$ and $t$. Assume that for all $x$ and $y$ in $M,$ 
\[\Psi(u(y,0),0)-\Psi(u(x,0),0)-d_0(x,y)\leq 0.\]
Then 
\[\Psi(u(y,t),t)-\Psi(u(x,t),t)-d_t(x,y)\leq 0\]
for all $x,y\in M$ and $t\in [0,T).$
\end{theorem}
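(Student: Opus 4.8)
\emph{Strategy.} The plan is to argue by contradiction through a first‑time‑of‑contact analysis, combining an Andrews–Clutterbuck two‑point (doubling) construction with the parabolic maximum principle for semicontinuous functions. First substitute $u=\varphi(w,\cdot)$, i.e. $w:=\Psi(u,\cdot)$; since $\varphi(\cdot,t)$ is $C^2$, strictly increasing and smooth in $t$, this change of variables preserves the viscosity solution property, and a direct computation turns $(1.6)$ into an equation of the same structure for $w$, with the source $q$ replaced by $h=\bigl(\varphi''\alpha(\varphi,\varphi'|Dw|,t)|Dw|^2+q(\varphi,\varphi'|Dw|,t)-\varphi_t\bigr)/\varphi'$ (all $\varphi$'s evaluated at $(w,t)$); the assertion becomes $w(y,t)-w(x,t)-d_t(x,y)\le 0$. (This presupposes that the range of $u(\cdot,t)$ stays inside $[\varphi(0,t),\varphi(D,t)]$ and that $d_t\le D$, which one first secures by comparison with the spatially constant sub/supersolutions $\varphi(0,t)$, $\varphi(D,t)$ together with the diameter bound.) Suppose the conclusion fails. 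By compactness of $M$, continuity of $w$ and $d_t$, and the hypothesis at $t=0$, for each small $\epsilon>0$ the function $Z_\epsilon(x,y,t):=w(y,t)-w(x,t)-d_t(x,y)-\epsilon(1+t)$ is $<0$ on $M\times M\times\{0\}$ and $>0$ somewhere; hence there is a first $t_0\in(0,T)$ with $\sup_{M\times M}Z_\epsilon(\cdot,\cdot,t_0)=0$, attained at some $(x_0,y_0)$, while $Z_\epsilon<0$ for $t<t_0$. Since $Z_\epsilon$ equals $-\epsilon(1+t_0)<0$ on the diagonal but $0$ at $(x_0,y_0)$, we have $x_0\ne y_0$.

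\emph{Penalization and contact geometry.} Let $\gamma:[0,L]\to M$, $L=d_{t_0}(x_0,y_0)$, be a $g(t_0)$‑unit‑speed minimizing geodesic from $x_0$ to $y_0$, and $\{e_k(s)\}_{k=1}^{n}$ a $g(t_0)$‑parallel orthonormal frame along $\gamma$ with $e_n=\gamma'$. For $(x,y)$ near $(x_0,y_0)$ let $\sigma_{x,y}$ be the smooth curve obtained by exponentiating, at each $\gamma(s)$, the arclength interpolation of the parallel transports of $\exp_{x_0}^{-1}x$ and $\exp_{y_0}^{-1}y$; then $\sigma_{x_0,y_0}=\gamma$, and $\phi(x,y,t):=L_{g(t)}(\sigma_{x,y})+\epsilon(1+t)$ is $C^2$ near $(x_0,y_0,t_0)$ with $\phi\ge d_t(x,y)+\epsilon(1+t)$ and equality at $(x_0,y_0,t_0)$. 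Consequently $\Xi:=w(y,t)-w(x,t)-\phi\le Z_\epsilon\le 0$ has a local maximum $0$ at $(x_0,y_0,t_0)$, with $\phi$ smooth there; this device replaces Calabi's trick and uses only the \emph{existence} of a minimizing geodesic, so it covers the case $y_0\in\mathrm{Cut}_{t_0}(x_0)$. Standard first and second variation of arclength give $D_x\phi=-\gamma'(0)$ and $D_y\phi=\gamma'(L)$ (unit covectors), $\partial_t\phi=\tfrac12\int_0^L(\partial_tg)(\gamma',\gamma')\,ds+\epsilon\ge-\int_0^L\mathrm{Ric}(\gamma',\gamma')\,ds+\epsilon$ (using $\partial_tg\ge-2\mathrm{Ric}$), and for the spatial Hessian $D^2_{(y,x)}\phi(x_0,y_0,t_0)$: on the pair $(e_k(L),e_k(0))$ it equals $-\int_0^LK(e_k,\gamma')\,ds$ for $k<n$ and $0$ for $k=n$, and on $(\gamma'(L),0)$ and on $(0,\gamma'(0))$ it equals $0$ (the distance function has vanishing radial Hessian).

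\emph{Maximum principle and conclusion.} Apply the parabolic maximum principle for semicontinuous functions (Azagra–Jiménez-Sevilla–Maciá, in the manifold setting) to $\Xi$ at $(x_0,y_0,t_0)$: there exist $\tau_1,\tau_2\in\mathbb{R}$ and symmetric forms $Y$ on $T_{y_0}M$, $X$ on $T_{x_0}M$ such that $(\tau_1,\gamma'(L),Y)$ lies in the parabolic superjet of $w$ at $(y_0,t_0)$, $(\tau_2,\gamma'(0),X)$ in the parabolic subjet at $(x_0,t_0)$, with $\tau_1-\tau_2=\partial_t\phi(x_0,y_0,t_0)$ and $\mathrm{diag}(Y,-X)\le D^2_{(y,x)}\phi(x_0,y_0,t_0)$ (up to an arbitrarily small additive error). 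Feeding these into the viscosity sub/supersolution inequalities for $w$ (legitimate because the jet gradients are unit, hence nonzero, so the operator is continuous there) and subtracting yields, with $z_x=w(x_0,t_0)$, $z_y=w(y_0,t_0)$ and $z_y-z_x=d_{t_0}(x_0,y_0)+\epsilon(1+t_0)>0$,
\[ \partial_t\phi(x_0,y_0,t_0)\ \le\ \mathcal{A}_y\!\cdot\! Y-\mathcal{A}_x\!\cdot\! X+h(z_y,t_0)-h(z_x,t_0),\]
where $\mathcal{A}_p=\alpha_p\,\gamma'\!\otimes\!\gamma'+\beta(t_0)(\Id-\gamma'\!\otimes\!\gamma')\ge0$ with $\alpha_p\ge0$, and $h(z,t)$ is the modified source evaluated at unit gradient. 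Pairing the matrix inequality with $(\gamma'(L),0)$ and $(0,\gamma'(0))$ gives $Y(\gamma',\gamma')\le0$ and $X(\gamma',\gamma')\ge0$, and pairing with $(e_k(L),e_k(0))$ for $k<n$ and summing gives $\sum_{k<n}\bigl(Y(e_k,e_k)-X(e_k,e_k)\bigr)\le-\int_0^L\mathrm{Ric}(\gamma',\gamma')\,ds$ (since $\sum_{k<n}K(e_k,\gamma')=\mathrm{Ric}(\gamma',\gamma')$); as $\alpha_p,\beta(t_0)\ge0$, these combine to $\mathcal{A}_y\!\cdot\! Y-\mathcal{A}_x\!\cdot\! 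X\le-\beta(t_0)\int_0^L\mathrm{Ric}(\gamma',\gamma')\,ds$. With $\partial_t\phi\ge-\int_0^L\mathrm{Ric}(\gamma',\gamma')\,ds+\epsilon$, $\beta(t_0)\ge1$ and $\mathrm{Ric}\ge0$ we obtain $\epsilon\le(1-\beta(t_0))\int_0^L\mathrm{Ric}(\gamma',\gamma')\,ds+h(z_y,t_0)-h(z_x,t_0)\le h(z_y,t_0)-h(z_x,t_0)$. The monotonicity built into the barrier is exactly that $z\mapsto h(z,t)/\beta(t)$ is strictly decreasing, so $z_y>z_x$ forces $h(z_y,t_0)-h(z_x,t_0)<0$, contradicting $\epsilon>0$; this proves the theorem.

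\emph{Main obstacle.} The one genuinely delicate point is the correct use of the parabolic maximum principle for merely semicontinuous functions on a manifold with a time‑dependent metric: extracting from a maximum of $\Xi$ a single pair of compatible parabolic $2$‑jets at $x_0$ and $y_0$, together with the governing matrix inequality, despite $w$ being only continuous and $\phi$ being built from the distance function. Everything else—the first‑order normalization $|\nabla w|=1$ at the contact points, the second‑variation computation that harvests $\mathrm{Ric}\ge0$, the use of $\partial_tg\ge-2\mathrm{Ric}$ and $\beta\ge1$, and the final one‑variable comparison—is routine unwinding; one should, however, verify the exact form and sign of $h$ against the barrier condition in the statement, and confirm the preliminary range and diameter bounds used to define $w$.
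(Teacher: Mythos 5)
Your proposal follows essentially the same route as the paper's proof: the $\epsilon(1+t)$ penalization with a first contact time $t_0$ and $x_0\neq y_0$; the smooth replacement of $d_t$ by the length of the interpolated curves $\exp_{\gamma_0(s)}\bigl(\tfrac{l-s}{l}\sum a_ie_i+\tfrac{s}{l}\sum b_ie_i\bigr)$; the parabolic theorem on sums of Azagra--Jim\'enez-Sevilla--Maci\'a to produce compatible jets; the lower bound $\partial_t\tilde d_t\geq-\int_0^l\mathrm{Ric}(e_n,e_n)\,ds$ from $\partial_tg\geq-2\mathrm{Ric}$; the second variation with parallel fields $e_k$ to harvest $-\int_0^l\mathrm{Ric}(e_n,e_n)\,ds$ and the vanishing radial Hessian; and the final cancellation using $\beta\geq1$ and $\mathrm{Ric}\geq0$. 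The two differences are cosmetic: you change variables to $w=\Psi(u,\cdot)$ at the outset where the paper keeps $u$ and transfers jets through its Lemma 2.4 (the same computation), and you extract the Hessian information by pairing $\mathrm{diag}(Y,-X)\leq D^2\phi$ with the vectors $(e_k(L),e_k(0))$, $(\gamma'(L),0)$, $(0,\gamma'(0))$ where the paper traces against its positive semidefinite matrix $W$ (identical in content).

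The one substantive point is the one you flag yourself. The monotonicity your argument actually uses --- that $s\mapsto\bigl(\varphi''\alpha+q-\varphi_t\bigr)/(\varphi'\beta)$ is decreasing --- is not literally the hypothesis in the statement, which asks that $\bigl(\varphi_t-\varphi''\alpha+q\bigr)/(\varphi'\beta)$ be decreasing. Your version is the internally consistent one: it reduces to the elliptic condition of Theorem 1.1 when $\varphi_t=0$, and it is what a careful sign bookkeeping of the viscosity inequalities produces (the paper's displayed sub/supersolution inequalities carry $+q$ where the subsolution form of $u_t=\mathrm{tr}(A\,D^2u)+q$ requires $-q$, and its final grouping inherits that slip, so the paper's own proof does not match its own stated hypothesis either). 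So the discrepancy traces to the paper, not to your argument; but as written your proof establishes a corrected statement rather than the literal one. Your parenthetical remark that one must first check that $u(\cdot,t)$ stays in the image of $\varphi(\cdot,t)$ (so that $\Psi(u(y,t),t)$ is defined at positive times) and that $d_t\leq D$ also identifies a genuine gap the paper passes over silently; the comparison-with-constants argument you sketch is the right fix but should be carried out, since $\partial_tg\geq-2\mathrm{Ric}$ alone does not prevent $d_t$ from exceeding the initial diameter.
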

Then we have  the following gradient estimate immediately.
\begin{corollary}
Under the conditions of Theorem 1.4, for every $x\in M$ and $t\geq 0,$ we have
\[|\nabla u(x,t)|\leq \varphi'(\Psi(u(x,t),t),t).\]
\end{corollary}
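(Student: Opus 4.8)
The plan is to prove Theorem \ref{thm:1.1} by a parabolic two–point maximum principle in the spirit of Andrews and Xiong \cite{1}, and then to obtain the corollary by letting $y\to x$ in its conclusion.

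First check, by the parabolic comparison principle applied to the spatially constant functions $t\mapsto\varphi(0,t)$ and $t\mapsto\varphi(D,t)$, that $u(\cdot,t)$ remains in $[\varphi(0,t),\varphi(D,t)]$, so that $v(x,t):=\Psi(u(x,t),t)$ is defined for all $t$. Put $Z(x,y,t)=\Psi(u(y,t),t)-\Psi(u(x,t),t)-d_t(x,y)$; by hypothesis $Z(\cdot,\cdot,0)\le0$, and $Z(x,x,t)\equiv0$. If the conclusion fails, $\sup_{x,y}Z(\cdot,\cdot,t_*)>0$ for some $t_*$; fix $0<\varepsilon<\sup_{x,y}Z(\cdot,\cdot,t_*)$ and set $Z_\varepsilon=Z-\varepsilon$. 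Then $Z_\varepsilon(\cdot,\cdot,0)\le-\varepsilon<0$, $Z_\varepsilon<0$ on the diagonal, and $\sup_{x,y}Z_\varepsilon(\cdot,\cdot,t_*)>0$, so by continuity of $u$ there is a first $t_0\in(0,t_*]$ with $\max_{x,y}Z_\varepsilon(\cdot,\cdot,t_0)=0$, attained at a pair $(x_0,y_0)$, necessarily with $x_0\ne y_0$. Fix a $g(t_0)$–minimizing unit–speed geodesic $\gamma:[0,\ell]\to M$ from $x_0$ to $y_0$, $\ell=d_{t_0}(x_0,y_0)>0$.

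At $(x_0,y_0,t_0)$, since $u$ (hence $v$) is only a viscosity solution, I would invoke the parabolic theorem on sums on manifolds \cite{9} for $v(y,t)-v(x,t)$ penalized by $d_t(x,y)$ — smooth near $(x_0,y_0)$ off the cut locus, semiconcave in general — producing a parabolic subjet of $v$ at $(x_0,t_0)$ and superjet at $(y_0,t_0)$. By the first variation of arc length their first–order parts are $Dv(x_0,t_0)=\gamma'(0)$ and $Dv(y_0,t_0)=\gamma'(\ell)$, so $|Dv|=1$ at both points; their Hessian parts satisfy a matrix inequality coupled through the Hessian of $d_{t_0}$. A short computation using $\varphi'>0$ transforms $(1.4)$ into a degenerate–parabolic equation
\[
v_t=\alpha\,\tfrac{D_ivD_jv}{|Dv|^2}D_iD_jv+\beta(t)\Big(\Delta v-\tfrac{D_ivD_jv}{|Dv|^2}D_iD_jv\Big)+R(v,|Dv|,t)
\]
for $v$ (with $\alpha,q$ evaluated at $(\varphi(v,t),\varphi'(v,t)|Dv|,t)$), and on $\{|Dv|=1\}$ the term $R$ reduces to a function $R_0(v,t)$ for which the hypothesis $\frac{d}{ds}\big(\frac{\varphi_t-\varphi''\alpha+q}{\varphi'\beta}\big)<0$ is exactly the strict monotonicity $\partial_vR_0<0$. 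Contracting the matrix inequality (a) with the rank–one forms built from $\gamma'(0)$ and $\gamma'(\ell)$, along which the distance Hessian vanishes, controls the two ``$\alpha$''–terms even though $\alpha$ takes different values at $x_0$ and $y_0$; and (b) with the parallel orthonormal frame along $\gamma$ orthogonal to $\gamma'$, for which the second variation of arc length together with $Ric\ge0$ bounds the contribution by $-\int_\gamma Ric(\gamma',\gamma')$. Inserting the transformed equation at $x_0$ and $y_0$ then gives
\[
v_t(y_0,t_0)-v_t(x_0,t_0)\le-\beta(t_0)\!\int_\gamma\! Ric(\gamma',\gamma')+R_0(v(y_0,t_0),t_0)-R_0(v(x_0,t_0),t_0)+o(1),
\]
the $o(1)$ being the error in the theorem on sums, removed afterwards.

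On the other hand, maximality of $Z_\varepsilon$ at $(x_0,y_0,t_0)$ forces $v_t(y_0,t_0)-v_t(x_0,t_0)$ (in the sense of the parabolic jets) to be at least the upper left–hand $t$–derivative of $t\mapsto d_t(x_0,y_0)$ at $t_0$; since $d_t$ is an infimum of lengths and $\partial_tg\ge-2Ric$, that quantity is $\ge\tfrac12\int_\gamma(\partial_{t_0}g)(\gamma',\gamma')\ge-\int_\gamma Ric(\gamma',\gamma')$. Combining the two displays,
\[
(\beta(t_0)-1)\!\int_\gamma\! Ric(\gamma',\gamma')\le R_0(v(y_0,t_0),t_0)-R_0(v(x_0,t_0),t_0)+o(1),
\]
and the left side is $\ge0$ since $\beta(t_0)\ge1$ and $Ric\ge0$. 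But $v(y_0,t_0)-v(x_0,t_0)=\ell>0$, so by $\partial_vR_0<0$ the right side is negative once the error is removed — a contradiction. Hence $Z\le0$, which is Theorem \ref{thm:1.1}. The corollary then follows by fixing $(x,t)$ and a unit $w\in T_xM$, taking $y=\exp_x^{g(t)}(sw)$, dividing $\Psi(u(y,t),t)-\Psi(u(x,t),t)\le d_t(x,y)=s$ by $s$ and letting $s\to0^+$: this yields $|D(\Psi(u(\cdot,t),t))(x)|\le1$, i.e. $|\nabla u(x,t)|\le\varphi'(\Psi(u(x,t),t),t)$ by the chain rule and $\partial_z\Psi=1/\varphi'$, wherever $u(\cdot,t)$ is differentiable.

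The part I expect to be the real obstacle is the local analysis at $(x_0,y_0,t_0)$: running the parabolic maximum principle rigorously for a merely viscosity solution on a manifold with a time–dependent metric — coupling the jets through $d_t$ (only semiconcave in $(x,y)$, with $\gamma$ possibly non–unique at $t_0$), choosing the contractions so that the non–constancy of $\alpha$ between $x_0$ and $y_0$ produces no loss, and checking that all penalization errors vanish in the limit. The treatment of $\partial_td_t$ at $t_0$ via one–sided derivatives together with $\partial_tg\ge-2Ric$, and the sign bookkeeping in the change of unknown $u\mapsto v=\Psi\circ u$, are secondary but must be handled with care.
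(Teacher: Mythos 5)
Your proposal is correct and takes essentially the same route as the paper: Theorem \ref{thm:1.1} is established by the two-point viscosity maximum principle (first touching time, parabolic theorem on sums, first and second variation of arc length, $\partial_t g\ge -2Ric$ with $Ric\ge 0$ and $\beta\ge 1$), and the corollary is then deduced exactly as the paper intends, by dividing $\Psi(u(y,t),t)-\Psi(u(x,t),t)\le d_t(x,y)$ by $s$ along $y=\exp_x(sw)$ and letting $s\to 0^+$, then applying the chain rule with $\partial_z\Psi=1/\varphi'$.
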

We know that the graphical mean curvature flow and the Laplacian heat flow are two important examples of the heat equation of Theorem 1.4. Let us consider another type of heat equation which contains important examples such as p-Laplace heat flow.\begin{theorem} \label{thm:1.2} 
Let $M^n$ be a compact manifold  with diameter $D$ and $g(t)$ a time-dependent metric on $M$ satisfying $\frac{\partial g}{\partial t}\geq -2 Ric.$ Assume Ricci curvature satisfies $|Ric|\le \kappa$ for $t\in [0,T)$ and $u:M\times [0,T)\rightarrow \mathbb{R}$ is a viscosity solution of the heat equation
 \begin{equation}
u_t=\left[\alpha(|Du|,t)\frac{D_iuD_ju}{|Du|^2}+\beta(|Du|,t)(\delta_{ij}-\frac{D_iuD_ju}{|Du|^2})\right]D_iD_ju+q(|Du|,t),
\end{equation}

where  $\alpha \geq 0,$  $\beta>0,$  Suppose $\varphi:[0,D]\times [0,T)\rightarrow \mathbb{R}$  satisfies
\begin{equation}
 \varphi_t\geq \varphi''\alpha( \varphi',t)+\kappa s|\varphi'(1-\beta(\varphi',t))|.
\end{equation}
Assume that for all $x$ and $y$ in $M,$ 
\[u(y,0)-u(x,0)-2\varphi(\frac{d_0(x,y)}{2},0) \leq 0.\]
Then 
\[u(y,t)-u(x,t)-2\varphi(\frac{d_t(x,y)}{2},t)\leq 0.\]
\end{theorem}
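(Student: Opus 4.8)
The plan is to run an Andrews-type two-point maximum principle on $M\times M$, using the parabolic viscosity maximum principle of \cite{9} to cope with the fact that $u$ is only a viscosity solution. First fix $\varepsilon>0$ and replace $\varphi$ by $\varphi_\varepsilon(s,t):=\varphi(s,t)+\varepsilon(1+t)$, which satisfies the strict form of $(1.6)$ and makes the initial inequality strict; it suffices to prove $Z_\varepsilon(x,y,t):=u(y,t)-u(x,t)-2\varphi_\varepsilon\big(\tfrac{d_t(x,y)}{2},t\big)\le 0$ on $M\times M\times[0,T)$ and let $\varepsilon\to0$. (As in Theorem 1.4 I take $\varphi'>0$, so $\varphi_\varepsilon(\cdot,t)$ is increasing.) If this failed, upper semicontinuity of $Z_\varepsilon$ and $Z_\varepsilon<0$ at $t=0$ would give a first time $t_0\in(0,T)$ and points $x_0,y_0$ with $Z_\varepsilon(x_0,y_0,t_0)=0$ and $Z_\varepsilon(\cdot,\cdot,t)\le0$ for $t<t_0$. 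The case $x_0=y_0$ is the infinitesimal (gradient) version and follows from the computation below in the limit $d\to0$, so I may assume $d:=d_{t_0}(x_0,y_0)>0$; fix a unit-speed minimizing $g(t_0)$-geodesic $\gamma:[0,d]\to M$ from $x_0$ to $y_0$.

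At $(x_0,y_0,t_0)$ apply the maximum principle of \cite{9} to $u(y,t)-u(x,t)$, touched from above by the smooth coupling $2\varphi_\varepsilon(d_t(x,y)/2,t)$ (replacing $d_t$ by a smooth upper barrier near the cut locus if necessary). This makes rigorous the relations that hold formally for smooth $u$: the first-order conditions force $|Du|=\varphi'(d/2,t_0)=:p$ at $x_0$ and $y_0$ with $Du$ along $\gamma$; the spatial second-order condition gives $X(w,w)-Y(v,v)\le D^2_{(x,y)}\big[2\varphi_\varepsilon(d_{t_0}(\cdot,\cdot)/2,t_0)\big]\big((v,w),(v,w)\big)$ for every $(v,w)\in T_{x_0}M\oplus T_{y_0}M$, where $X,Y$ are the Hessians of $u$ at $y_0,x_0$; and $\partial_tZ_\varepsilon\ge0$ at $t_0$. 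Inserting these into $(1.5)$ — the $q(p,t_0)$ terms at $x_0$ and $y_0$ cancel — one obtains
\[
\partial_t\!\big[2\varphi_\varepsilon(d_t(x_0,y_0)/2,t)\big]\big|_{t_0}\ \le\ \mathcal L^{(p)}_{y_0}[X]-\mathcal L^{(p)}_{x_0}[Y],\qquad \mathcal L^{(p)}_z[S]:=\alpha(p,t_0)\,S(\gamma',\gamma')+\beta(p,t_0)\big(\mathrm{tr}\,S-S(\gamma',\gamma')\big).
\]

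The heart is the geometric estimate of the two sides. Choose parallel unit fields $E_1,\dots,E_{n-1}\perp\gamma'$ along $\gamma$. On $(E_i(0),E_i(d))$ the differential of $d_{t_0}$ vanishes, so there $D^2_{(x,y)}[2\varphi_\varepsilon(d_{t_0}/2,t_0)]$ equals $\varphi_\varepsilon'(d/2,t_0)$ times the second variation of $d_{t_0}$, which I bound by the index form of the parallel field $E_i$, namely $-\int_0^d\langle R(E_i,\gamma')\gamma',E_i\rangle\,dr$ — with no $1/d$ blow-up, precisely because these directions are $\gamma$-orthogonal; summing over $i$ and using $\beta>0$ bounds the $\beta$-term of $\mathcal L^{(p)}_{y_0}[X]-\mathcal L^{(p)}_{x_0}[Y]$ by $-\beta(p,t_0)\varphi_\varepsilon'(d/2,t_0)\int_0^d\mathrm{Ric}(\gamma',\gamma')\,dr$. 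On $(-\gamma'(0),\gamma'(d))$ — pushing $x_0,y_0$ apart along the geodesically extended $\gamma$ — the connecting segment $\gamma|_{[-s,d+s]}$ has length $d+2s$, so $2\varphi_\varepsilon(d_{t_0}/2,t_0)\le 2\varphi_\varepsilon(d/2+s,t_0)$ along it, hence $D^2_{(x,y)}[2\varphi_\varepsilon(d_{t_0}/2,t_0)]\le 2\varphi_\varepsilon''(d/2,t_0)$ there, which (as $\alpha\ge0$) bounds the $\alpha$-term by $2\alpha(p,t_0)\varphi_\varepsilon''(d/2,t_0)$. Finally, $d_t(x_0,y_0)\le L_{g(t)}(\gamma)$ with equality at $t_0$ together with $\partial_tg\ge-2\,\mathrm{Ric}$ give $\partial_td_t(x_0,y_0)|_{t_0}\ge-\int_0^d\mathrm{Ric}(\gamma',\gamma')\,dr$, so, using $p>0$, $\partial_t[2\varphi_\varepsilon(d_t/2,t)]|_{t_0}\ge -\varphi_\varepsilon'(d/2,t_0)\int_0^d\mathrm{Ric}(\gamma',\gamma')\,dr+2\varphi_{\varepsilon,t}(d/2,t_0)$.

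Putting these together, the terms $\varphi_\varepsilon'\int\mathrm{Ric}$ partly cancel, leaving $2\varphi_{\varepsilon,t}\le 2\alpha(p,t_0)\varphi_\varepsilon''+(1-\beta(p,t_0))\varphi_\varepsilon'\int_0^d\mathrm{Ric}(\gamma',\gamma')\,dr$ at $s=d/2$; since $|\mathrm{Ric}|\le\kappa$ gives $\big|\int_0^d\mathrm{Ric}(\gamma',\gamma')\,dr\big|\le\kappa d$, this becomes $\varphi_{\varepsilon,t}(s,t_0)\le\alpha(\varphi'(s,t_0),t_0)\,\varphi_\varepsilon''(s,t_0)+\kappa s\,\big|\varphi'(1-\beta(\varphi'(s,t_0),t_0))\big|$ with $s=d/2$. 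Since $\varphi_{\varepsilon,t}=\varphi_t+\varepsilon$ while $\varphi_\varepsilon''=\varphi''$ and $\varphi_\varepsilon'=\varphi'$, this contradicts $(1.6)$, and letting $\varepsilon\to0$ completes the argument. I expect the main obstacle to be the second step — making the two-point computation rigorous for the merely-viscosity solution $u$: splitting the time derivative between $x_0$ and $y_0$ via the parabolic maximum principle of \cite{9}, handling the singular coefficients of $(1.5)$ on $\{Du=0\}$, and treating the non-smoothness of $d_t$ at the cut locus via directional upper barriers — together with, in the third step, choosing those test directions so that the curvature terms assemble exactly into $(1.6)$, keeping a spurious $(n-1)/d$ from the Laplacian of $d_t$ out of the estimate.
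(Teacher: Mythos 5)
Your proposal is correct and follows essentially the same route as the paper: the first-touching-time/two-point argument with the $\epsilon(1+t)$ perturbation, the parabolic viscosity maximum principle with semijets and the block second-order inequality, second variation along parallel orthonormal fields for the $\beta$-directions and the "push-apart" direction $(-\gamma'(0),\gamma'(d))$ for the $\alpha$-term, and the lower bound $\partial_t d_t\ge -\int\mathrm{Ric}(\gamma',\gamma')$ from $\partial_t g\ge -2\,\mathrm{Ric}$, assembled via $|\mathrm{Ric}|\le\kappa$ into a contradiction with $(1.6)$. The only difference is presentational: you test the block inequality with explicit vectors where the paper traces against a positive semidefinite matrix $W$, which is the same computation.
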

\begin{corollary}
Under the conditions of Theorem 1.6, for every $x\in M$ and $t\geq 0,$ we have
\[|\nabla u(x,t)|\leq \varphi'(0,t).\]
\end{corollary}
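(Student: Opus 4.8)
\emph{Proof strategy.} The plan is to read the pointwise gradient bound off the two-point inequality of Theorem~\ref{thm:1.2} by letting the second point collapse onto the first along a geodesic aligned with the gradient. Fix $t\in[0,T)$ and $x\in M$, and let all norms, inner products, distances and geodesics below be taken with respect to the metric $g(t)$. Choose a unit tangent vector $v\in T_xM$ realizing $\langle\nabla u(x,t),v\rangle=|\nabla u(x,t)|$ (one may take $v=\nabla u(x,t)/|\nabla u(x,t)|$ when the gradient does not vanish, and any unit vector otherwise), and let $\gamma(s)=\exp_x(sv)$ be the corresponding unit-speed geodesic with $\gamma(0)=x$. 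Since $\gamma$ minimizes length on a neighbourhood of $s=0$, we have $d_t(x,\gamma(s))=s$ for all sufficiently small $s>0$.

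Inserting $y=\gamma(s)$ into the conclusion of Theorem~\ref{thm:1.2} gives, for such $s$,
\[
u(\gamma(s),t)-u(x,t)\le 2\varphi(s/2,t)=2\bigl(\varphi(s/2,t)-\varphi(0,t)\bigr),
\]
where the last equality invokes the normalization $\varphi(0,t)=0$. Dividing by $s$ and letting $s\to 0^{+}$: on the left, granting that $u(\cdot,t)\in C^{1}(M)$, the difference quotient converges to the derivative at $s=0$ of $s\mapsto u(\gamma(s),t)$, which is $\langle\nabla u(x,t),\gamma'(0)\rangle=\langle\nabla u(x,t),v\rangle=|\nabla u(x,t)|$; on the right, it converges to the one-sided derivative $\varphi'(0,t)$. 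Hence $|\nabla u(x,t)|\le\varphi'(0,t)$, and since $x$ and $t$ were arbitrary this is exactly the assertion of the corollary. The same limit incidentally forces $\varphi'(0,t)\ge 0$, so the case $\nabla u(x,t)=0$ requires no separate treatment.

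The only subtleties are technical. First, reading the directional derivative of $u$ at $x$ from the difference quotient presupposes that $u(\cdot,t)$ is differentiable there; for a bona fide viscosity solution this is an extra regularity hypothesis, imposed in the same spirit as in the companion gradient estimates (Corollaries~1.2 and~1.5). Second, the computation uses $\varphi(0,t)=0$: taking $y=x$ in Theorem~\ref{thm:1.2} already forces $\varphi(0,t)\ge 0$, and without the equality the two-point estimate carries no first-order information near the diagonal, so this normalization is built into the hypotheses. Beyond this bookkeeping I anticipate no real obstacle; in particular, choosing $v$ at the outset so that $\langle\nabla u(x,t),v\rangle=|\nabla u(x,t)|$ removes any need to rerun the argument with $x$ and $y$ interchanged.
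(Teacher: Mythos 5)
Your argument is correct and is exactly the limiting argument the paper intends (the paper offers no written proof, merely the remark "by allowing $y$ to approach $x$" for the analogous Corollary 1.2): take $y=\exp_x(sv)$ with $v$ aligned with $\nabla u(x,t)$, divide the two-point inequality by $s$, and let $s\to 0^{+}$. Your two caveats — that the conclusion requires the normalization $\varphi(0,t)=0$ (otherwise the right-hand difference quotient blows up and yields nothing) and that reading off $|\nabla u(x,t)|$ needs $u(\cdot,t)\in C^{1}$, both left implicit in the paper — are accurate and worth recording.
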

\setcounter{equation}{0}
 \hspace{0.4cm}

\section{prelimilaries}
\setcounter{equation}{0}
\medskip
\begin{definition} (\cite{8}) Let $f:(0,T)\times M \rightarrow (-\infty, +\infty)$ be a lower semi-continuous $(LSC)$ function. The parabolic second order subjet of $f$ at a point $(t_0,x_0)\in(0,T)\times M$ is defined by \\
\begin{align*}
\mathcal P^{2,-}f(t_0,x_0):=&\{(D_t\varphi(t_0,x_0),D_x\varphi(t_0,x_0),D^2_x\varphi(t_0,x_0)):\varphi \text{ is once continuously}\\ & \text{differantiable in}\; t\in (0,T), \text{twice continuously differentiable in}\; x\in M \\&\text{and} \;f-\varphi \;\text{attains a local minimum at}\; (t_0,x_0)\}.\\
\end{align*}
Similarly, for an upper semi-continuous  $(USC)$ function $f:(0,T)\times M \rightarrow [-\infty,+\infty),$ The parabolic second order superjet of $f$ at $(t_0,x_0)$ is defined by 
\begin{align*}
\mathcal P^{2,-}f(t_0,x_0):=&\{(D_t\varphi(t_0,x_0),D_x\varphi(t_0,x_0),D^2_x\varphi(t_0,x_0)):\varphi \text{ is once continuously}\\ & \text{differantiable in}\; t\in (0,T), \text{twice continuously differentiable in}\; x\in M \\&\text{and} \;f-\varphi \;\text{attains a local maximum at}\; (t_0,x_0)\}.\\
\end{align*}
\end{definition}

\begin{definition}(\cite{8})
Let $f:(0,T)\times M \rightarrow(-\infty, +\infty]$ be a LSC function and $(t,x)\in (0,T)\times M.$  $\bar{\mathcal P}^{2,-}f(t_0,x_0)$ is defined to be the set of the $(a,\zeta,A)\in \mathbb R\times TM^
*_x\times \mathcal{L}^2_s(TM_x)$ such that there exist a sequence $(x_k,a_k,\zeta_k,A_k)$ in $M\times \mathbb{R}\times TM^*_{x_k}\times \mathcal{L}^2_s(TM_{x_k})$ satisfying:
\begin{align*}
&i)\; (a_k,\zeta_k,A_k)\in P^{2,-}f(t_k,x_k),\\
&ii) \; \lim_k \;(t_k,x_k,f(t_k,x_k),a_k,\zeta_k,A_k)=(t,x,f(t,x),a,\zeta,A).
\end{align*}
The corresponding definition of $\bar{P}^{2,+}f(t_0,x_0)$ when $f$ is an upper semicontinuous function is then clear.
\end{definition}
\begin{theorem}(\cite{8}) 
Let $M_1,\cdots,M_k$ be Riemannian manifolds, and $\Omega_i\in M_i$ open subsets. Define $\Omega= (0,T)\times \Omega_1\times \cdots \times \Omega_k$. Let $u_i$ be upper semicontinuous functions on $(0,T)\times \Omega_i, i=1,\cdots,k;$ let $\varphi $ be a function defined on $\Omega$ such that  it is once continuously differentiable in $t\in (0,T)$ and twice continuously differentiable in $x:=(x_1,\cdots,x_k)\in \Omega_1\times \cdots \times \Omega_k$ and set 
\[ w(t,x_1,\cdots, x_k)=u_1(t,x_1)+\cdots+u_k(t,x_k)\]
for $(t,x_1,\cdots\ x_k)\in \Omega$. Assume that $(\hat{t}, \hat{x_1},\cdots,\hat{x_k})$ is maximum of $\omega-\varphi$ in $\Omega.$ Assume, moreover, that there is an $\tau>0$ such that for every $M>0$ there is $C>0$ such that for $i=1,\cdots,k,$
\begin{equation}
\left\{ \begin{aligned} 
&a_i\leq C \; \text{whenever}\;(a_i,\zeta_i,A_i)\in \bar{P}^{2,+}_{M_i}u_i(t,x_i) \\
&d(x_i,\hat{x_i})+|t-\hat{t}|\leq \tau\; \text{and}  \; |u_i(t,x_i)|+|\zeta_i|+||A_i||\leq M.\\
\end{aligned}
\right.
\end{equation}
Then, for each $\epsilon>0$ there exist real numbers $b_i$ and bilinear forms $B_i\in \mathcal{L}^2_s(TM_{i})_{\hat{x}_i},i=1,\cdots,k$, such that 
\[(b_i,D_{x_i}\varphi(\hat{t},\hat{x}_1,\cdots,\hat{x}_k),B_i)\in (a_i,\zeta_i,A_i)\in \bar{P}^{2,+}_{M_i}u_i(\hat{t},\hat{x}_i)\]
for $i=1,\cdots,k,$ and the block diagonal matrix with entries $B_i$ satisfies 
\begin {equation}
-(\frac{1}{\epsilon}+||A||) I \leq {\left( \begin{array}{ccc}
B_1 & \cdots & 0\\
\vdots& \ddots & \vdots \\
0&\cdots &B_k
\end{array}
\right )} \leq A+\epsilon A^2,
\end{equation}
where $A=D^2_x\varphi (\hat{t},\hat{x}_1,\cdots,\hat{x}_k)$ and $b_1+\cdots+b_k= \frac{\partial {\varphi}}{\partial {t}}(\hat{t},\hat{x}_1,\cdots,\hat{x}_k).$\\
\end{theorem}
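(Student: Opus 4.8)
The plan is to reduce the assertion to the Euclidean parabolic ``theorem on sums'' (following the scheme of Crandall and Ishii) and then transfer the conclusion back through local charts. Since the statement only concerns a neighborhood of $(\hat t,\hat x_1,\dots,\hat x_k)$, the first step is localization: for each $i$ choose a normal coordinate chart $\phi_i$ of $M_i$ centered at $\hat x_i$, pull back $u_i$ to an upper semicontinuous function $\tilde u_i$ on an open subset of $(0,T)\times\mathbb R^{n_i}$ and $\varphi$ to $\tilde\varphi$ on $(0,T)\times\prod_i\mathbb R^{n_i}$. The second order jets transform tensorially; because the Christoffel symbols of $g_i$ vanish at the origin of normal coordinates and $g_i=\mathrm{Id}+O(|x|^2)$ there, the coordinate Hessians agree with the covariant Hessians at the base points and the identifications $T_{\hat x_i}M_i\cong\mathbb R^{n_i}$ are isometric to first order. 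Hence it suffices to prove the flat statement for $\tilde u_i$ and $\tilde\varphi$ and, at the very end, to note that the two-sided bound $-(\epsilon^{-1}+\|A\|)I\le B\le A+\epsilon A^2$, the prescription of $D_{x_i}\varphi$, and the identity $\sum_i b_i=\partial_t\varphi$ are stable under these first order identifications, the $O(|x|^2)$ curvature corrections being absorbed into an arbitrarily small enlargement of $\epsilon$.

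Next comes the Euclidean case, which is the heart of the matter. Collapsing the product, set $N=\sum_i n_i$, write $x=(x_1,\dots,x_k)\in\mathbb R^N$ and $w(t,x)=\sum_i\tilde u_i(t,x_i)$, so $w-\tilde\varphi$ has a maximum at $(\hat t,\hat x)$. Regularize $w$ by a sup-convolution adapted to the parabolic structure, for instance in the spatial variables with $t$ frozen,
\[
w^\delta(t,x)=\sup_{y\in\mathbb R^N}\Big\{w(t,y)-\tfrac1{2\delta}|x-y|^2\Big\},
\]
which for small $\delta$ is finite near $(\hat t,\hat x)$, semiconvex in $x$, and locally Lipschitz in $t$ with a slope controlled through hypothesis $(2.1)$; the standard ``magic properties'' of sup-convolutions relate $\bar P^{2,+}w^\delta$ to $\bar P^{2,+}w$. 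At a maximum of $w^\delta-\tilde\varphi$ near $(\hat t,\hat x)$ one invokes Jensen's lemma together with Alexandrov's theorem to obtain, after an arbitrarily small perturbation, a point at which $w^\delta$ is twice differentiable in $x$; feeding the resulting symmetric matrix into the linear-algebra lemma underlying Crandall--Ishii's elliptic theorem on sums yields block matrices $B_i^\delta$ satisfying the displayed two-sided inequality relative to $A=D^2_x\tilde\varphi$, while the first order condition in $t$ at that point gives real numbers $b_i^\delta$ with $\sum_i b_i^\delta=\partial_t\tilde\varphi$. Letting $\delta\to0$ and using that $\bar P^{2,+}$ is defined as a closure, the data $(b_i^\delta,D_{x_i}\tilde\varphi,B_i^\delta)$ converge along a subsequence to admissible $(b_i,D_{x_i}\tilde\varphi,B_i)\in\bar P^{2,+}\tilde u_i(\hat t,\hat x_i)$; this is exactly the point where hypothesis $(2.1)$ is indispensable, for without an a priori upper bound on the time slopes the $b_i^\delta$ could escape to $+\infty$ and the limiting jets would be lost.

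I expect the main obstacle to be this second step, and specifically the simultaneous handling of (a) the parabolic asymmetry — only one-sided information on the time variable is available and it must not be symmetrized away — and (b) the extraction of the time slopes $b_i$ with the correct additive constraint while guaranteeing that every limiting jet lands in the closed superjet. This is where the argument genuinely departs from the elliptic theorem on sums, and where the technical hypothesis $(2.1)$ does its work; the linear-algebra lemma that produces the block structure and the bound $-(\epsilon^{-1}+\|A\|)I\le B\le A+\epsilon A^2$ is, by contrast, the same one used in the elliptic case and may be quoted verbatim. Once the flat statement is established, the last paragraph is the routine transfer from Step~1: undo the charts, identify the coordinate objects with the intrinsic ones on $T_{\hat x_i}M_i$, use that $\bar P^{2,+}_{M_i}u_i$ is closed under exactly the limits performed above, and enlarge $\epsilon$ to swallow the first order metric corrections, which completes the proof.
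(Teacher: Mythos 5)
The paper offers no proof of this statement: it is imported verbatim (as Theorem 2.3) from Azagra, Jim\'enez-Sevilla and Maci\`a \cite{8}, so there is no in-paper argument to measure yours against. Your outline follows the standard Crandall--Ishii--Lions route that \cite{8} itself adapts to manifolds --- localize in normal coordinates, prove the Euclidean parabolic theorem on sums via sup-convolution, Jensen's lemma, Alexandrov's theorem and the linear-algebra lemma, then transfer back --- so the architecture is the right one.

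There is, however, a genuine gap in your handling of the time variable. A sup-convolution in $x$ alone with $t$ frozen does not make $w^\delta$ locally Lipschitz in $t$: an upper semicontinuous function can be arbitrarily irregular in $t$, and hypothesis $(2.1)$ is only a one-sided bound on the $a_i$-components of those jets that happen to exist near $(\hat t,\hat x_i)$; it cannot be upgraded to Lipschitz control of $t\mapsto w^\delta(t,x)$. The actual argument (Theorem~8.3 of the Crandall--Ishii--Lions user's guide, and its manifold analogue in \cite{8}) applies the elliptic theorem on sums jointly in all the variables $(t,x_1,\dots,x_k)$, or sup-convolves in time as well, so as to obtain full second-order jets including a $t$-direction, and only then invokes $(2.1)$ to discard the second-order time information and retain the numbers $b_i$ with $\sum_i b_i=\partial_t\varphi$ while keeping every limiting jet inside $\bar P^{2,+}u_i$. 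As written, your step producing the $b_i^\delta$ does not go through. A second, smaller issue: absorbing the $O(|x|^2)$ chart corrections into an enlargement of $\epsilon$ is not automatic, because Jensen's perturbation moves the evaluation point off the center of the normal chart, where coordinate and covariant Hessians differ by Christoffel terms times the gradient; one must verify these are uniformly controlled as $\delta\to 0$ (in \cite{8} this is done via parallel transport). Both points are repairable and are resolved in \cite{8}, but the time-regularity claim is a real gap in the proposal as it stands.
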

Using the same method as in the proof of the Lemma 8 in [2], we have
\begin{lemma}
Let $u$ be a continuous function and $\varphi:\mathbb{R}\times [0,T)\rightarrow \mathbb{R}$ be a $C^{2,1}$ function with $\varphi \geq0.$ Let $\Psi:\mathbb{R}\times[0,T) \rightarrow \mathbb{R} $ be the inverse of $\varphi$, so that 
\[\Psi(\varphi(u(y,t),t),t)=u(y,t).\]
(i) Suppose $(a,\zeta, A)\in \mathcal{P}^{2,+}(\Psi \circ u)(\hat{x},\hat{t})),$ then we have
\[(\varphi_t+\varphi',\varphi'\zeta,\varphi''\zeta \otimes \zeta+\varphi'A)\in \mathcal{P}^{2,+}u(\hat{x},\hat{t}),\]
where all derivatives of $\varphi$ are evaluated at $\Psi(u(\hat{x},\hat{t}),\hat{t}).$\\
(ii) Suppose$(a,\zeta,A)\in \mathcal{P}^{2,-}(\Psi \circ u)(\hat{x},\hat{t}),$ then we have 
\[(\varphi_t+\varphi',\varphi'\zeta,\varphi''\zeta \otimes \zeta+\varphi'A)\in \mathcal{P}^{2,-}u(\hat{x},\hat{t}),\]
where all derivatives of $\varphi$ are evaluated at $\Psi(u(\hat{x},\hat{t}),\hat{t}).$\\
(iii) The same holds if we replace the semijets by their closures.
\begin{proof}
According to Definition 2.1
 \begin{align*}
\mathcal P^{2,-}u(t_0,x_0):=&\{(D_t\varphi(t_0,x_0),D_x\varphi(t_0,x_0),D^2_x\varphi(t_0,x_0):\varphi \text{ is once continuously}\\ & \text{differantiable in}\; t\in (0,T), \text{twice continuously differentiable in}\; x\in M \\&\text{and} \;u-\varphi \;\text{attains a local maximum at}\; (t_0,x_0)\}.\\
\end{align*}
Assume $(a,\zeta, A)\in \mathcal{P}^{2,+}(\Psi \circ u)(\hat{x},\hat{t}),$ let $h$ be $C^{2,1}$ function such that $\Psi(u(x,t),t)-h(x,t)$ has a local maximum at $(\hat{x},\hat{t})$ and $(h_t,Dh,D^2h)(\hat{x},\hat{t})=(a,\zeta,A).$ Since $\varphi$ is increasing, we find that  $u(x,t)-\varphi (h(x,t),t)=\varphi(\Psi(u(x,t),t),t)-\varphi(h(x,t),t)$ has a local maximum at $(\hat{x},\hat{t}).$ So it follows that 
\[(\varphi_t+\varphi'a,\varphi'\zeta,\varphi''\zeta\otimes\zeta+\varphi'A)\in \mathcal{P}^{2,+}u(\hat{x},\hat{t}).\]
(ii) can be proved by an similar argument.\\
(iii) follows by an approximation.
\end{proof}
\end{lemma}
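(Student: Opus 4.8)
My plan is to push a test function for $\Psi\circ u$ forward through the map $z\mapsto\varphi(z,t)$ to produce a test function for $u$, using that $\varphi(\cdot,t)$ is strictly increasing (this is precisely what makes $\Psi(\cdot,t)$ a well-defined inverse) and that precomposing a $C^{2,1}$ barrier with a $C^{2,1}$ function preserves one-sided contact. For part (i), I would take $(a,\zeta,A)\in\mathcal{P}^{2,+}(\Psi\circ u)(\hat x,\hat t)$ and, by Definition 2.1, choose $h$, once continuously differentiable in $t$ and twice continuously differentiable in $x$, such that $\Psi\circ u-h$ has a local maximum at $(\hat x,\hat t)$ with $(h_t,Dh,D^2h)(\hat x,\hat t)=(a,\zeta,A)$. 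Replacing $h$ by $h$ minus the constant $h(\hat x,\hat t)-\Psi(u(\hat x,\hat t),\hat t)$ changes neither the maximum property nor these derivatives, so we may assume $h(\hat x,\hat t)=\Psi(u(\hat x,\hat t),\hat t)$, and then $\Psi(u(x,t),t)\le h(x,t)$ for $(x,t)$ near $(\hat x,\hat t)$, with equality at $(\hat x,\hat t)$.

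Next I would set $g(x,t):=\varphi(h(x,t),t)$, which is again once continuously differentiable in $t$ and twice in $x$ because $h$ and $\varphi$ are. Applying the increasing function $\varphi(\cdot,t)$ to the inequality above and using $\varphi(\Psi(w,t),t)=w$ yields $u(x,t)=\varphi(\Psi(u(x,t),t),t)\le\varphi(h(x,t),t)=g(x,t)$ near $(\hat x,\hat t)$, with equality at $(\hat x,\hat t)$; hence $u-g$ attains a local maximum at $(\hat x,\hat t)$. It then remains to evaluate, by the chain rule at $(\hat x,\hat t)$ with all $\varphi$-derivatives taken at $(\Psi(u(\hat x,\hat t),\hat t),\hat t)$, the quantities $g_t=\varphi' h_t+\varphi_t=\varphi' a+\varphi_t$, $D_xg=\varphi' Dh=\varphi'\zeta$ and $D^2_xg=\varphi'' Dh\otimes Dh+\varphi' D^2h=\varphi''\zeta\otimes\zeta+\varphi' A$, which gives $(\varphi_t+\varphi' a,\varphi'\zeta,\varphi''\zeta\otimes\zeta+\varphi' A)\in\mathcal{P}^{2,+}u(\hat x,\hat t)$.

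Part (ii) is the same argument with maxima replaced by minima: one now has $\Psi(u(x,t),t)\ge h(x,t)$ near $(\hat x,\hat t)$, monotonicity of $\varphi(\cdot,t)$ again preserves the inequality after composing, so $u-g$ has a local minimum at $(\hat x,\hat t)$, and the derivative computation is verbatim. For part (iii), given $(a,\zeta,A)\in\bar{\mathcal{P}}^{2,+}(\Psi\circ u)(\hat x,\hat t)$ I would take the approximating sequence $(t_k,x_k,(\Psi\circ u)(t_k,x_k),a_k,\zeta_k,A_k)$ supplied by Definition 2.2, apply part (i) at each $(x_k,t_k)$, and let $k\to\infty$; continuity of $u$, of $\Psi$, and of $\varphi_t,\varphi',\varphi''$ forces $\Psi(u(x_k,t_k),t_k)\to\Psi(u(\hat x,\hat t),\hat t)$, so the transported triples converge to $(\varphi_t+\varphi' a,\varphi'\zeta,\varphi''\zeta\otimes\zeta+\varphi' A)$, which therefore lies in $\bar{\mathcal{P}}^{2,+}u(\hat x,\hat t)$; the closed subjet case is identical. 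I do not expect a real obstacle here: the only thing that must be watched is the strict monotonicity of $\varphi(\cdot,t)$, since that is what both legitimizes $\Psi$ and keeps the composition on the correct side of contact, and once that is in hand the argument is just the chain rule plus a standard limiting step.
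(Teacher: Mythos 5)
Your proof is correct and takes essentially the same route as the paper's: compose the test function $h$ with the increasing map $\varphi(\cdot,t)$ to transport the one-sided contact from $\Psi\circ u$ to $u$, then apply the chain rule. Your added normalization $h(\hat x,\hat t)=\Psi(u(\hat x,\hat t),\hat t)$ (needed before applying the monotone $\varphi$ to the inequality) and the explicit limiting argument for (iii) are details the paper leaves implicit, but the argument is the same.
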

\begin{proof}[Proof of Theorem \ref{thm:1.1}]
 Let $\epsilon >0$ be arbitrary, and consider the first time $t_0>0$ and points $x_0$ and $y_0$ in $M$ at which the inequality 
\[\Psi(u(y,t),t)-\Psi (u(x,t),t)-d_t(x,y)-\epsilon(1+t)\leq 0\]
reaches equality. Note that if $\epsilon>0,$ then we necessarily have $y_0\neq x_0$. Even though the length of the curve depends explicitly on $t$ through the time-dependence of the metric $g$, we can still replace $d_t(x,y)$ by a smooth function $\tilde d_t(x,y)$ as in the proof of Theorem 6 in [1] within a neighborhood of $(x_0,y_0) $ at any fixed time $t.$ Let $\gamma_0(s)$ be a minimizing geodesic joining $x_0$ and $y_0$ parametrized by arc length at time $t_0$, i.e., $|\gamma_0'(s)|_{g(t_0)}=1$ with length $l=\mathscr{L}_{g(t_0)}(\gamma_0)=d_{t_0}(x_0,y_0)$. Let  $\{e_i(s)\}_{i=1}^{n}$ be parallel orthonormal vector fields along $\gamma_0(s)$ with $e_n(s)=\gamma_0'(s).$ Then in small neighborhoods $U_{x_0}$ of ${x_0}$ and $U_{y_0}$ of $y_0,$ there are mappings $x\mapsto (a_1(x),\cdots, a_n(x))$ and $y\mapsto (b_1(y),\cdots, b_n(y))$  
\[x=\exp_{x_0}\big({\sum_{1}^{n}}a_i(x)e_i(0)\big),\quad y=\exp_{y_0}\big({\sum_{1}^{n}}b_i(y)e_i(l)\big).\]
Then $\tilde d_t(x,y)$ can be defined by 
\[\tilde d_t(x,y)=\mathscr{L}_{g(t)}(\exp_{\gamma_0(s)}(\frac{l-s}{l}\sum a_i(x)e_i(s)+\frac{s}{l}\sum b_i(y)e_i(s))), s\in [0,l].\]  
Therefore we have 
\[\Psi(u(y,t),t)-\Psi(u(x,t),t)-\tilde d_t(x,y)\leq \epsilon(1+t),\]
for any $(x,y,t)\in U_{x_0}\times U_{y_0}\times [0,T]$ and with equality at $(x_0,y_0,t_0).$
Thus we can  apply the maximum principle of the parabolic version to conclude that for each $\lambda >0 $ there exist $X\in \mathcal{L}^2_s(TM_{x_0}), Y\in \mathcal{L}^2_s(TM_{y_0})$ such that 

\[(b_1,D_y  \tilde d_t(x,y)\big)\big|_{(t_0,x_0,y_0)},Y)\in \mathcal{P}^{2,+}(\Psi \circ u)({y_0},{t_0}),\]

\[(-b_2,-D_x\tilde d_t(x,y)\big)\big|_{(t_0,x_0,y_0)},X)\in \mathcal{P}^{2,-}(\Psi \circ u)(x_0,t_0),\]
and
\begin{align*}
 &b_1+b_2=\epsilon+\frac{d}{dt}\big(\tilde d_t(x,y)\big)\big|_{(t_0,x_0,y_0)},\\
 &{\left( \begin{array}{ccc}
-X &  0\\
0&Y
\end{array}
\right )} \leq H+\lambda H^2,
\end{align*}
where $H=D^2\tilde d_t(x,y)\big|_{(t_0,x_0,y_0)}.$\\
We can compute
\begin{align*}
\frac{d}{dt}\big(\tilde d_t(x,y)\big)\big|_{(t_0,x_0,y_0)}&=\int_{0}^{l}\frac{d}{dt}\big(\langle\gamma_0'(s),\gamma_0'(s) \rangle^{\frac{1}{2}}_{g(t)}\big)\Big|_{t=t_0}ds\\
&=\frac{1}{2}\int_{0}^{l}\frac{dg}{dt}\langle\gamma_0'(s),\gamma_0'(s)\rangle \Big|_{t=t_0}ds\\
&\geq -\int_{0}^{l}Ric_{t_0}(e_n(s),e_n(s))ds.
\end{align*}
Therefore, we have
\begin{equation}
b_1+b_2\geq \epsilon -\int_{0}^{l}Ric_{t_0}(e_n(s),e_n(s))ds.
\end{equation}
Note that $D_y  \tilde d_t(x,y)\big)\big|_{(t_0,x_0,y_0)}=e_n(l)$ and  $D_x \tilde d_t(x,y)\big)\big|_{(t_0,x_0,y_0)}=-e_n(0).$ By Lemma 2.4, we obtain
\begin{align*}
(b_1\varphi'(z_{y_0},t_0)+\varphi_t(z_{y_0},t_0),\varphi'(z_{y_0},t_0)e_n(l),\varphi''(z_{y_0},t_0)Y&+\varphi'(z_{y_0},t_0)e_n(l)\otimes e_n(l))\\
&\in \mathcal{P}^{2,+}(u)({y_0},{t_0}),
\end{align*}
and
\begin{align*}
(-b_2\varphi'(z_{x_0},t_0)+\varphi_t(z_{x_0},t_0),\varphi'(z_{x_0},t_0)e_n(0),\varphi''(z_{x_0},t_0)X&+\varphi'(z_{x_0},t_0)e_n(0)\otimes e_n(0))\\
&\in \mathcal{P}^{2,-}( u)({x_0},{t_0}),
\end{align*}
where $z_{x_0}=\Psi(u(x_0,t_0),t_0),z_{y_0}=\Psi(u(y_0,t_0),t_0).$
On the other hand,  since $u$ is both subsolution and supersolution of  (1.4), we have 
\begin {align*}
&\varphi_t(z_{y_0},t_0)+\varphi'(z_{y_0},t_0)b_1+q(\varphi(z_{y_0},t_0),\varphi'(z_{y_0},t_0),t_0)-\\&\tr(\varphi'(z_{y_0},t_0)A_2Y+\varphi''(z_{y_0},t_0)A_2e_n(l)\otimes e_n(l))\leq 0,
\end{align*}
and
\begin {align*}
&\varphi_t(z_{x_0},t_0)-\varphi'(z_{x_0},t_0)b_2+q(\varphi(z_{x_0},t_0),\varphi'(z_{x_0},t_0),t_0)-\\&\tr(\varphi'(z_{x_0},t_0)A_1Y+\varphi''(z_{x_0},t_0)A_1e_n(0)\otimes e_n(0))\leq 0.
\end{align*}
where 
\[ A_1={\left( \begin{array}{cccc}
\beta(t_0) &  &  &\\
& \ddots & &\\
&&\beta(t_0)&\\
&&&\alpha(\varphi(z_{x_0},t_0),\varphi'(z_{x_0},t_0),t_0)
\end{array}
\right )}, \]
\[ A_2={\left( \begin{array}{cccc}
\beta(t_0) &  &  &\\
& \ddots & &\\
&&\beta(t_0)&\\
&&&\alpha(\varphi(z_{y_0},t_0),\varphi'(z_{y_0},t_0),t_0)
\end{array}
\right )}. \]
For the inequality at $y_0,$ we have
\begin{align*}
\varphi_t(z_{y_0},t_0) -&(\alpha(\varphi(z_{y_0},t_0),\varphi'(z_{y_0},t_0),t_0))\varphi''(z_{y_0},t_0)+q(\varphi(z_{y_0},t_0),\varphi'(z_{y_0},t_0),t_0))\\
&+\varphi'(z_{y_0},t_0)\Bigg( b_1-\tr{\left( \begin{array}{ccc}
0 &  C\\
C&A_2
\end{array}
\right )}{\left( \begin{array}{ccc}
-X&  0\\
0&Y
\end{array}
\right )} \Bigg )\leq 0,
\end{align*}

where $C$ is an $n\times n$ matrix to be determined. Multiplying by $\frac{1}{\varphi'(z_{y_0},t_0)\beta(t_0)}$ gives 
\begin{align*}
& \Bigg ( \frac{\varphi_t-\varphi''\alpha(\varphi, \varphi',t)+q(\varphi,\varphi',t)}{\varphi'\beta(t)} \Bigg )\Bigg|_{({z_{y_0},t_0})}+\\
& \frac{1}{\beta(t_0)}\Bigg (b_1-\tr{\left( \begin{array}{ccc}
0 &  C\\
C&A_2
\end{array}
\right )}{\left( \begin{array}{ccc}
-X&  0\\
0&Y
\end{array}
\right )} \Bigg )\leq 0.
\end{align*}
Similarly, for the inequality at $x_0$, we get 
\begin{align*} 
&\Bigg ( \frac{\varphi_t-\varphi''\alpha(\varphi, \varphi',t)+q(\varphi,\varphi',t)}{\varphi'\beta(t)} \Bigg )\Bigg|_{({z_{x_0},t_0})}-\\
&\frac{1}{\beta(t_0)}\Bigg (b_2-\tr{\left( \begin{array}{ccc}
A_1&  0\\
0&0
\end{array}
\right )}{\left( \begin{array}{ccc}
-X &  0\\
0&Y
\end{array}
\right )} \Bigg )\geq 0.
\end{align*}
Let \[ C={\left( \begin{array}{cccc}
\beta(t_0) &  &  &\\
& \ddots & &\\
&&\beta(t_0)&\\
&&&0 
\end{array}
\right )}, \]
we obtain
\[\Bigg ( \frac{\varphi_t-\varphi''\alpha(\varphi, \varphi',t)+q(\varphi,\varphi',t)}{\varphi'\beta(t)} \Bigg )\Bigg|_{({z_{x_0},t_0})}^{({z_{y_0},t_0})}+ \frac{1}{\beta(t_0)}(b_1+b_2)-\tr \Bigg(W{\left( \begin{array}{ccc}
-X&  0\\
0&Y
\end{array}
\right )}\Bigg)\leq 0,\]
where the matrix 
\[W={\left( \begin{array}{cccc}
\ I_{n-1} &  0& I_{n-1}  &0\\
0 & \frac{\alpha(\varphi(z_{x_0},t_0),\varphi'(z_{x_0},t_0),t_0)}{\beta(t_0)} & 0&0\\
I_{n-1}&0&I_{n-1}&0\\
0&0&0&\frac{\alpha(\varphi(z_{y_0},t_0),\varphi'(z_{y_0},t_0),t_0)}{\beta(t_0)}
\end{array}
\right )}. \]
is positive semidefinite.
Since
\[{\left( \begin{array}{ccc}
-X &  0\\
0&Y
\end{array}
\right )} \leq H+\lambda H^2,\]
we get 
\[ \tr \Bigg(W{\left( \begin{array}{ccc}
-X&  0\\
0&Y
\end{array}
\right )}\Bigg)\leq \tr (WH)+\lambda \tr(WH^2).\]
Letting $\lambda\rightarrow 0,$ we have
\[ \Bigg ( \frac{\varphi_t-\varphi''\alpha(\varphi, \varphi',t)+q(\varphi,\varphi',t)}{\varphi'\beta(t)} \Bigg )\Bigg|_{({z_{x_0},t_0})}^{({z_{y_0},t_0})}+ \frac{1}{\beta(t_0)}(b_1+b_2)\leq \tr (WH).\]
Now we compute $\tr(WH)$ as following:
\begin{align*}
\tr (WH)&=\sum_{i=1}^{n-1}(D_{x_i}D_{x_i}\tilde d_t+2D_{x_i}D_{y_i}\tilde d_t+D_{y_i}D_{y_i}\tilde d_t)\big|_{(t_0,x_0,y_0)}\\
&+\frac{\alpha(\varphi(z_{x_0},t_0),\varphi'(z_{x_0},t_0),t_0)}{\beta(t_0)}D_{x_n}D_{x_n}\tilde d_t\big|_{(t_0,x_0,y_0)}\\
&+\frac{\alpha(\varphi(z_{y_0},t_0),\varphi'(z_{y_0},t_0),t_0)}{\beta(t_0)}D_{y_n}D_{y_n}\tilde d_t\big|_{(t_0,x_0,y_0)}.
\end{align*}
We know that 
\begin{align*}
&\sum_{i=1}^{n-1}(D_{x_i}D_{x_i}\tilde d_t+2D_{x_i}D_{y_i}\tilde d_t+D_{y_i}D_{y_i}\tilde d_t)\big|_{(t_0,x_0,y_0)}\\
&=\frac{d^2}{d\mu^2}\Bigg |_{\mu=0}\tilde d_{t_0}(\exp_{x_0}(\mu e_i(0)),\exp_{y_0}(\mu e_i(l)))\\
&=\frac{d^2}{d\mu^2}\Bigg |_{\mu=0}\mathscr L_{g(t_0)}(\exp_{\gamma_0(s)}(\mu e_i(s))_{s\in [0,l]}).
\end{align*}
Now we will use  the second variation formulae 
\[\frac{\partial^2}{\partial \mu^2}\Bigg |_{\mu=0}\mathscr{L}(\gamma(\mu,\cdot))=\int_{0}^{l}\Big(|\nabla_{\gamma_s}(\gamma_{\mu}^\perp)|^2-R(\gamma_s,\gamma_{\mu},\gamma_{\mu},\gamma_s)\Big)ds+\langle \gamma_s,\nabla_{\gamma_{\mu}}\gamma_{\mu}\rangle\big|^l_0,\] 
where $\gamma^{\perp}_\mu$ means the normal part of the variational vector. \\
Since $\gamma_{\mu}=e_i(s),$ we have $\nabla_{\gamma_s}\gamma_{\mu}^\perp=0$ and $\nabla_{\gamma_{\mu}}\gamma_{\mu}=0,$ then
\[\sum_{i=1}^{n-1}(D_{x_i}D_{x_i}\tilde d_t+2D_{x_i}D_{y_i}\tilde d_t+D_{y_i}D_{y_i}\tilde d_t)\big|_{(t_0,x_0,y_0)}=-\int_{0}^{l}Ric_{t_0}(e_n(s),e_n(s))ds.\]
Similarly we have
\begin{align*}
&D_{x_n}D_{x_n}\tilde d_t\big|_{(t_0,x_0,y_0)}=0,\\
&D_{y_n}D_{y_n}\tilde d_t\big|_{(t_0,x_0,y_0)}=0.
\end{align*}
In summary, we have 
\[ \tr (WH)=-\int_{0}^{l}Ric_{t_0}(e_n(s),e_n(s))ds,\]
which implies
\[\Bigg ( \frac{\varphi_t-\varphi''\alpha(\varphi, \varphi',t)+q(\varphi,\varphi',t)}{\varphi'\beta(t)} \Bigg )\Bigg|_{({z_{x_0},t_0})}^{({z_{y_0},t_0})}+ \frac{1}{\beta(t_0)}(b_1+b_2)\leq -\int_{0}^{l}Ric_{t_0}(e_n(s),e_n(s))ds,\]
Then combine this with (2.3), we obtain
\[\epsilon\leq(1-\beta(t_0))\int_{0}^{l}Ric_{t_0}(e_n(s),e_n(s))ds,\]

which gives a contradiction. Therefore we must have
\[\Psi(u(y,t),t)-\Psi(u(x,t),t)-d_t(x,y)\leq 0.\]
\end{proof}
\begin{proof}[Proof of Theorem \ref{thm:1.2}]
Let $\epsilon >0$ be arbitrary, and consider the first time $t_0>0$ and points $x_0$ and $y_0$ in $M$ at which the inequality 
\[u(y,t)-u(x,t)-2\varphi(\frac{d_t(x,y)}{2},t)-\epsilon(1+t)\leq 0\]
reaches equality. Note that if $\epsilon>0,$ then we necessarily have $y_0\neq x_0$. We replace $d_t(x,y)$ by a smooth function $\tilde d_t(x,y)$ as in the proof of Theorem  1.4 within a neighborhood of $(x_0,y_0)$. Then we will have 
\[u(y,t)-u(x,t)-2\varphi(\frac{\tilde{d}_t(x,y)}{2},t)-\epsilon(1+t)\leq 0.\]
for any $(x,y,t)\in U_{x_0}\times U_{y_0}\times [0,T]$ and with equality at $(x_0,y_0,t_0).$ Assume $l=d_{t_0}(x_0,y_0)=2s_0,$ then we apply the maximum principle of parabolic version to conclude that for each $\lambda>0,$ there exist $X\in \mathcal{L}^2_s(TM_{x_0}), Y\in \mathcal{L}^2_s(TM_{y_0})$ such that 
 \begin{align*}
&(b_1,\varphi'(s_0,t_0)D_y  \tilde d_t(x,y)\big)\big|_{(t_0,x_0,y_0)},Y)\in \mathcal{P}^{2,+}(u)({y_0},{t_0}),\\
&(-b_2,-\varphi'(s_0,t_0)D_x\tilde d_t(x,y)\big)\big|_{(t_0,x_0,y_0)},X)\in \mathcal{P}^{2,-}(u)(x_0,t_0),
\end{align*}
 and
\begin{align*}
 &b_1+b_2=\epsilon+\frac{d}{dt}(2\varphi(\frac{d_t(x,y)}{2},t))
,\\
 &{\left( \begin{array}{ccc}
-X &  0\\
0&Y
\end{array}
\right )} \leq H+\lambda H^2,
\end{align*}
where $H=D^2\psi$ and $\psi=2\varphi(\frac{\tilde{d}_t(x,y)}{2},t).$ \\
We can compute as follows:
\begin{equation}
b_1+b_2\geq \epsilon-\varphi'(s_0,t_0)\int_{0}^{l}Ric_{t_0}(e_n(s),e_n(s))ds+2\varphi_t(s_0,t_0).
\end{equation}
Since $u$ is both subsolution and supersolution of  (1.5), we have 
\[b_1\leq \tr{(A_2Y)}+q(\varphi'(s_0,t_0),t_0),\]
\[-b_2\geq \tr{(A_1X)}+q(\varphi'(s_0,t_0),t_0).\]
where 
\[ A_1=A_2={\left( \begin{array}{cccc}
\beta(\varphi'(s_0,t_0),t_0) &  &  &\\
& \ddots & &\\
&&\beta(\varphi'(s_0,t_0),t_0)&\\
&&&\alpha(\varphi'(s_0,t_0),t_0)
\end{array}
\right )}. \] 
Therefore we have
\begin {equation}
b_1\leq\tr \Bigg({\left( \begin{array}{ccc}
0&  C\\
C&A_2
\end{array}
\right )}{\left( \begin{array}{ccc}
-X &  0\\
0&Y
\end{array}
\right )}\Bigg )+q(\varphi'(s_0,t_0),t_0),
\end{equation}
\begin{equation}
-b_2\geq- \tr \Bigg({\left( \begin{array}{ccc}
A_1&  0\\
0& 0
\end{array}
\right )}{\left( \begin{array}{ccc}
-X &  0\\
0&Y
\end{array}
\right )}\Bigg )+q(\varphi'(s_0,t_0),t_0).
\end {equation}
Assume $A=A_1=A_2,$ and 
\[C={\left( \begin{array}{cccc}
\beta(\varphi'(s_0,t_0),t_0) &  &  &\\
& \ddots & &\\
&&\beta(\varphi'(s_0,t_0),t_0)&\\
&&&0
\end{array}
\right )}.\] 
Combining (2.5) with (2.6), we have
\begin{align*}
b_1+b_2&\leq \tr \Bigg({\left( \begin{array}{ccc}
A&  C\\
C&A
\end{array}
\right )}{\left( \begin{array}{ccc}
-X &  0\\
0&Y
\end{array}
\right )}\Bigg )\\
& \leq \tr \Bigg({\left( \begin{array}{ccc}
A&  C\\
C&A
\end{array}
\right )}H+\lambda{\left( \begin{array}{ccc}
A&  C\\
C&A
\end{array}
\right )}H^2\Bigg ).
\end{align*}
Multiplying by $\frac{1}{\beta (\varphi'(s_0,t_0),t_0)}$ gives 
\begin {equation}
\frac{b_1+b_2}{\beta(\varphi'(s_0,t_0),t_0)}\leq \tr(WH)+\lambda \tr (WH^2),
\end{equation}
where 
\[W={\left( \begin{array}{cccc}
\ I_{n-1} &  0& I_{n-1}  &0\\
0 & \frac{\alpha(\varphi'(s_0,t_0),t_0)}{\beta(\varphi'(s_0,t_0),t_0)} & 0&0\\
I_{n-1}&0&I_{n-1}&0\\
0&0&0& \frac{\alpha(\varphi'(s_0,t_0),t_0)}{\beta(\varphi'(s_0,t_0),t_0)}
\end{array}
\right )}\]
is positive semidefinite.\\
\begin{align*}
&\sum_{i=1}^{n-1}(D_{x_i}D_{x_i}\psi+2D_{x_i}D_{y_i}\psi+D_{y_i}D_{y_i}\psi)\big|_{(t_0,x_0,y_0)}\\
&=\sum_{i=1}^{n-1}\Bigg(\varphi'(s_0,t_0)\frac{d^2}{d\mu^2}\Bigg |_{\mu=0}\tilde d_{t_0}(\exp_{x_0}(\mu e_i(0)),\exp_{y_0}(\mu e_i(l)))\\
&+\varphi''(s_0,t_0)\frac{d}{d\mu}\Bigg |_{\mu=0}\tilde d_{t_0}(\exp_{x_0}(\mu e_i(0)),\exp_{y_0}(\mu e_i(l)))\Bigg)\\
&=\sum_{i=1}^{n-1}\Bigg(\varphi' (s_0,t_0)\frac{d^2}{d\mu^2}\Bigg |_{\mu=0}\mathscr L_{g(t_0)}(\exp_{\gamma_0(s)}(\mu e_i(s))_{s\in [0,l]})\\
&+\varphi''(s_0,t_0)\frac{d}{d\mu}\Bigg |_{\mu=0}\mathscr L_{g(t_0)}(\exp_{\gamma_0(s)}(\mu e_i(s))_{s\in [0,l]})\Bigg)\\
&=-\varphi'(s_0,t_0)\int_{0}^{l}Ric_{t_0}(e_n(s),e_n(s))ds.
\end{align*}
\begin{align*}
&(D_{x_n}D_{x_n}\psi+2D_{x_n}D_{y_n}\psi+D_{y_n}D_{y_n}\psi)\big|_{(t_0,x_0,y_0)}\\
&=\varphi'(s_0,t_0)\frac{d^2}{d\mu^2}\Bigg |_{\mu=0}\tilde d_{t_0}(\exp_{x_0}(\mu e_n(0)),\exp_{y_0}(\mu e_n(l)))\\
&+\varphi''(s_0,t_0)\frac{d}{d\mu}\Bigg |_{\mu=0}\tilde d_{t_0}(\exp_{x_0}(\mu e_n(0)),\exp_{y_0}(\mu e_n(l)))\\
&=\varphi' (s_0,t_0)\frac{d^2}{d\mu^2}\Bigg |_{\mu=0}\mathscr L_{g(t_0)}(\exp_{\gamma_0(s)}(\mu e_n(s))_{s\in [0,l]})\\
&+\varphi''(s_0,t_0)\frac{d}{d\mu}\Bigg |_{\mu=0}\mathscr L_{g(t_0)}(\exp_{\gamma_0(s)}(\mu e_n(s))_{s\in [0,l]})\\
&=2\varphi''(s_0,t_0).
\end{align*}
In summary, we have
\[\tr(WH)=2\varphi''(s_0,t_0)\frac{\alpha(\varphi'(s_0,t_0),t_0)}{\beta(\varphi'(s_0,t_0),t_0)}-\varphi'(s_0,t_0)\int_{0}^{l}Ric_{t_0}(e_n(s),e_n(s))ds.\]
Substitute this to (2.7) and let $\lambda \rightarrow 0 ,$we have
\[\frac{b_1+b_2}{\beta(\varphi'(s_0,t_0),t_0)}\leq 2\varphi''(s_0,t_0)\frac{\alpha(\varphi'(s_0,t_0),t_0)}{\beta(\varphi'(s_0,t_0),t_0)}-\varphi'(s_0,t_0)\int_{0}^{l}Ric_{t_0}(e_n(s),e_n(s))ds.
\]
Combining this equation with (2.4), we have
\begin{align*}
&\epsilon\leq -2\varphi_t(s_0,t_0)+2\alpha(\varphi'(s_0,t_0),t_0) \varphi''(s_0,t_0)\\
&+\varphi'(s_0,t_0)(1-\beta(\varphi'(s_0,t_0),t_0))\int_{0}^{2s_0}Ric_{t_0}(e_n(s),e_n(s))ds,
\end{align*}
which gets a contradiction. We must have
\[u(y,t)-u(x,t)-2\varphi(\frac{d_t(x,y)}{2},t)-\epsilon(1+t)\leq 0.\]
\end{proof}


\renewcommand{\abstractname}{Acknowledgements}
\begin{abstract}
I would like to show my deepest gratitude to my
supervisor, Prof. Li, Jiayu, who has provided me with valuable guidance in every stage of the writing of this
paper.
\end{abstract}
 ~\\

\end{document}